\newtheorem{theorem}{Theorem}[section]
\newtheorem{lemma}[theorem]{Lemma}
\newtheorem{corollary}[theorem]{Corollary}
\newtheorem*{remark}{Remark}
\begin{document}

\title{Stern-Brocot Trees from Weighted Mediants}

\author{Dhroova Aiylam \\ MIT \\ \href{mailto:dhroova@mit.edu}{dhroova@mit.edu} \and Tanya Khovanova \\ MIT \\ \href{mailto:tanyakh@yahoo.com}{tanyakh@yahoo.com}}

\maketitle

\begin{abstract}
In this paper we discuss a natural generalization of the Stern Brocot tree which comes from the introduction of weighted mediants. We focus our attention on the case $k = 3$, in which $(2a + c)/(2b + d)$ and $(a + 2c)/(b + 2d)$ are the two mediants inserted between $a/b$ and $c/d$. Our main result is a determination of which rational numbers between the starting terms appear in the tree. We extend this result to arbitrary reduction schemes as well.

\end{abstract}

\section{Introduction}\label{sec:intro}

\indent

The Stern--Brocot tree is an object of classical interest in number theory. Discovered independently by Moritz Stern \cite{Stern} in 1858 and Achille Brocot \cite{Brocot} in 1861, it was originally used as a way to find rational approximations of certain kinds to specific numbers. As a consequence, the Stern-Brocot tree is deeply connected to the theory of continued fractions \cite{CTN}. It also comes up in a variety of other contexts, including Farey Sequences, Ford Circles, and Hurwitz' theorem \cite{CW, SDS, Reg, Gold}. 

The classical Stern-Brocot tree is generated row by row, as follows: the zeroth row has entries $\frac{0}{1}$ and $\frac{1}{0}$. In each subsequent row, all entries from the previous row are copied and between every pair of neighboring entries $\frac{a}{b}$ and $\frac{c}{d}$ the mediant fraction $\frac{a + c}{b + d}$ is inserted. This process is repeated ad infinitum; the result is the Stern--Brocot tree \cite{SB}.

The classical Stern-Brocot tree is well-understood, but there are several different variants that are natural candidates for study. For instance, one could consider varying the starting terms of the tree and ask which of the properties of the classical Stern-Brocot tree continue to hold, and to what extent. This question was addressed in detail in \cite{PRIMES}. The main result of that paper is a proof that regardless of the initial terms the Stern-Brocot tree contains every rational number between them. 

In Section~\ref{sec:defs}, we define precisely what is meant by the Stern-Brocot tree from weighted mediants given a pair of starting terms, an idea originally proposed by Prof. James Propp \cite{Propp}. We also define the cross-determinant and discuss its role in fraction reduction.

In Section~\ref{sec:numbersappear}, for an arbitrary Stern-Brocot tree we characterize which rational numbers between the starting terms appear. We turn this characterization into a simple, explicit description of these fractions. 

Finally, in Section~\ref{sec:reduction} we consider how non-uniform reduction of fractions impacts the Stern-Brocot tree and, in the process, introduce the idea of a reduction scheme. We expand our earlier result to deal with arbitrary reduction schemes.

\section{Weighted Mediants: Notation and Definitions}\label{sec:defs}

For a fixed parameter $k$, we say the weighted mediants of two fractions $a/b$, $c/d$ are 
$$\frac{(k - 1)a + c}{(k - 1)b + d}, \; \; \frac{(k - 2)a + 2c}{(k - 2)b + 2d}, \; \; \dots, \; \; \frac{a + (k - 1)c}{b + (k - 1)d}$$ whence there are $k - 1$ mediants in all. We stipulate that each of these fractions be reduced to lowest terms. As in the classical Stern-Brocot tree, the tree begins with two starting terms and each row is obtained by inserting mediants between consecutive fractions in the previous row.  With this notation, the classical Stern-Brocot tree is the case $k = 2$ with starting terms $0/1$ and $1/0$. The next row of this tree is $0/1$, $1/1$, and $1/0$. The two halves of the tree with respect to the mid-line are equivalent. Indeed if we swap numerators and denominators and reverse the order, the first part of the tree becomes the second part of the tree. For this reason, many researchers study only the first half of the tree.

In this paper we will restrict our attention to the case $k = 3$. While we treat the problem in fully general terms, one tree of particular interest to us is the one with starting terms $0/1$ and $1/1$. We call this $k = 3$ Stern-Brocot tree the \textit{unit tree}. Here is what the unit tree looks like:

	\[ \frac{0}{1} \; \; \; \frac{1}{1}\]  \[\frac{0}{1} \; \; \; \frac{1}{3} \; \; \; \frac{2}{3} \; \; \; \frac{1}{1}\] 
	\[\frac{0}{1} \; \; \; \frac{1}{5} \; \; \;  \frac{2}{7} \; \; \; \frac{1}{3} \; \; \; \frac{4}{9} \; \; \; \frac{5}{9} \; \; \; \frac{2}{3} \; \; \; \frac{5}{7} \; \; \; \frac{4}{5} \; \; \; \frac{1}{1}\]
	
It is easy to see that all the denominators in this tree must be odd. Later we will show that any number between 0 and 1 with an odd denominator in lowest terms appear in the tree.

Let us now introduce some notation and definitions. If $\frac{p}{q}$ and $\frac{r}{s}$ are rational numbers in lowest terms, their \emph{weighted mediants} are the numbers $\frac{2p + r}{2q + s}$ and $\frac{p + 2r}{q + 2s}$ in lowest terms. We call these the left and right mediants of $\frac{p}{q}$ and $\frac{r}{s}$, respectively.

Next let $SB(\frac{a}{b}, \frac{c}{d})$ stand for the ($k = 3$) Stern-Brocot tree with starting terms $\frac{a}{b}, \frac{c}{d}$, and denote the $i$-th row of this tree with $SB_i(\frac{a}{b}, \frac{c}{d})$. Thus $SB_0(\frac{a}{b}, \frac{c}{d}) = \{ \frac{a}{b}, \frac{c}{d}\}$ is the $0$-th row of the tree, and in general $SB_{i + 1}(\frac{a}{b}, \frac{c}{d})$ is obtained by copying all terms from $SB_{i}(\frac{a}{b}, \frac{c}{d})$ and inserting between every pair of consecutive fractions $\frac{p}{q}, \frac{r}{s} \in SB_{i}(\frac{a}{b}, \frac{c}{d})$ their weighted mediants. Thus the $i$-th row of the tree has $3^i+1$ numbers. As a matter of convention, we assume $\frac{a}{b} < \frac{c}{d}$ (of course, the reverse tree would simply be the reflection) and that $b, d \ge 0$. 

At first it might seem odd to permit $b, d = 0$, but recall the starting terms of the classical Stern-Brocot tree are $\frac{0}{1}$ and $\frac{1}{0}$. In the classical case,  $\frac{1}{0}$ is interpreted as $+ \infty$ and all the relevant conclusions (and their proofs) are the same. It stands to reason we should allow $b = 0$ or $d = 0$ --- not both; if $b=d=0$, all denominators of the fractions between them would be $0$ and it is a trivial case. 

We say the $\emph{cross-determinant}$ of two fractions $\frac{p}{q}$ and $\frac{r}{s}$ is $\mathcal{C}(\frac{p}{q}, \frac{r}{s}) = qr - ps$. We will be most interested in the cross-determinant of consecutive numbers in $SB_i(\frac{a}{b}, \frac{c}{d})$; as was the case when $k = 2$ \cite{PRIMES}, the cross-determinant essentially determines how fractions in the Stern-Brocot tree are capable of reducing. In particular, the factor by which the ratio of a weighted mediant is reduced to its lower terms is a factor of $\mathcal{C}(\frac{p}{q}, \frac{r}{s})$, as we will prove in Lemma~\ref{thm:cdetred}. We will see that the cross determinant of the starting terms is also important in determining which fractions will ultimately appear in the tree.

\begin{lemma}\label{thm:cdetred}
The factor by which a weighted mediant of two fractions $\frac{p}{q}$ and $\frac{r}{s}$ is reduced divides $\mathcal{C}(\frac{p}{q}, \frac{r}{s})$. 
\end{lemma}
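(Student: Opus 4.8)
The plan is to exhibit $\mathcal{C}(\frac{p}{q},\frac{r}{s}) = qr - ps$ as an explicit integer linear combination of the numerator and denominator of each \emph{unreduced} weighted mediant. Any common divisor of those two integers then automatically divides the combination, and since the reduction factor of a mediant is precisely the gcd of its unreduced numerator and denominator, this will give the claim.

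First I would treat the left mediant $\frac{2p+r}{2q+s}$. Write $g = \gcd(2p+r,\,2q+s)$, so that $g$ is exactly the factor by which this mediant reduces. The key identity is
\[ q(2p+r) - p(2q+s) = qr - ps = \mathcal{C}\Bigl(\tfrac{p}{q},\tfrac{r}{s}\Bigr), \]
in which the weighting coefficient $2$ cancels. Since $g$ divides both $2p+r$ and $2q+s$, it divides the left-hand side, hence $g \mid \mathcal{C}$. Next, for the right mediant $\frac{p+2r}{q+2s}$, set $g' = \gcd(p+2r,\,q+2s)$. Here pairing with $p,q$ would only yield $2\mathcal{C}$, so instead I would pair with $r,s$:
\[ s(p+2r) - r(q+2s) = sp - rq = -\,\mathcal{C}\Bigl(\tfrac{p}{q},\tfrac{r}{s}\Bigr), \]
again with the $2$'s dropping out, so $g' \mid \mathcal{C}$. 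Since the reduction factor of each weighted mediant divides $\mathcal{C}$, the lemma follows.

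There is no genuine obstacle here: the only point requiring a moment's care is choosing, for each of the two mediants, the linear combination in which the weight $2$ disappears, namely combining the left mediant $\frac{2p+r}{2q+s}$ with $(p,q)$ and the right mediant $\frac{p+2r}{q+2s}$ with $(r,s)$. The same two identities, with coefficients $k-j$ and $j$ in place of $1$ and $2$, handle the general weighted mediants of Section~\ref{sec:defs}; for $k=3$ one of those coefficients is always $1$, which is what makes the argument immediate.
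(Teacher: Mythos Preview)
Your argument is correct and is essentially the paper's own proof: the identity $q(2p+r)-p(2q+s)=qr-ps$ is exactly the computation $\mathcal{C}\bigl(\tfrac{p}{q},\tfrac{(2p+r)/g}{(2q+s)/g}\bigr)=(qr-ps)/g$ that the paper uses, just written before dividing through by $g$. The only cosmetic difference is that the paper phrases the conclusion as ``the cross-determinant takes integer values'' while you phrase it as ``$g$ divides an integer linear combination.''
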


\begin{proof}
Before reduction, the left mediant of $\frac{p}{q}$ and $\frac{r}{s}$ is $\frac{2p + r}{2q + s}$ and the right mediant  is $\frac{p + 2r}{q + 2s}$. Suppose the left mediant iss reduced by a factor $g$, so that after reduction it has numerator $\frac{2p + r}{g}$ and denominator $\frac{2q + s}{g}$. Then $$\mathcal{C} \left( \frac{p}{q}, \frac{\frac{2p + r}{g}}{\frac{2q + s}{g}} \right) = \frac{qr - ps}{g}.$$ Of course, $\mathcal{C}$ only takes integer values, so $g | qr - ps$. By analogous reasoning, the same is true for the right mediant. 
\end{proof}

The cross-determinant of two fractions is positive if the second fraction is larger than the first.  The cross-determinant is zero if and only if two fractions represent the same number.

It is important to remember that the cross-determinant depends on the representation of rational numbers, not just on the numbers themselves. The cross-determinant is the smallest when both rational numbers are in their lowest terms. 

Finally, given a rational number $x/y$ and an interval $I = [\frac{p}{q}, \frac{r}{s}]$, whose endpoints are ratios, we define the \textit{modulus} $m_I(x/y)$ of the number with respect to the interval's representation as the sum of cross-determinants with its end-points: 
$$m_I(x/y) = \mathcal{C}\left(\frac{p}{q}, \frac{x}{y}\right) + \mathcal{C}\left(\frac{x}{y}, \frac{r}{s}\right).$$

We usually consider the modulus only of $\frac{x}{y} \in [\frac{p}{q}, \frac{r}{s}]$, so that $m_I(x/y) > 0$. Notice that if $m_I(x/y) = 1$, then $x/y$ must coincide with one of the end points of the interval. As we will see in Section~\ref{sec:numbersappear}, the modulus is critical in the proof of which rationals appear in the Stern-Brocot tree. 

In Section~\ref{sec:numbersappear} we classify the numbers which appear in the Stern-Brocot tree.

\section{Rational Numbers in the Stern-Brocot tree}\label{sec:numbersappear}

Given a Stern-Brocot tree $SB(\frac{a}{b}, \frac{c}{d})$, imagine we want to determine whether some target rational $\frac{x}{y} \in [\frac{a}{b}, \frac{c}{d}]$ appears in the tree. Writing out the first few rows of any such tree makes it fairly clear that not all $x/y$ between the endpoints will appear. Indeed, from the first few rows of $SB(\frac{0}{1}, \frac{1}{1})$ it seems only fractions with odd denominator can ever appear in the tree. In fact, this is a special case of the following lemma:

\begin{lemma}\label{thm:modular}
Let $\frac{a}{b}$ and $\frac{c}{d}$ be fractions in lowest terms. All rational numbers $\frac{x}{y} \in SB(\frac{a}{b}, \frac{c}{d})$ satisfy either $(x, y) \equiv (a, b) \pmod{2}$ or $(x, y) \equiv (c, d) \pmod{2}$ where congruence is componentwise. Moreover, if $(a, b) \not\equiv (c, d) \pmod{2}$, then two consecutive fractions in the tree, alternate which of the parity equations they satisfy.
\end{lemma}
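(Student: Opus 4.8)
The plan is to prove this by induction on the row index $i$. The base case $i = 0$ is immediate: the row $SB_0(\frac{a}{b}, \frac{c}{d}) = \{\frac{a}{b}, \frac{c}{d}\}$ consists exactly of $\frac{a}{b}$, which satisfies the first parity equation, and $\frac{c}{d}$, which satisfies the second; and if $(a,b) \not\equiv (c,d) \pmod 2$ then these two consecutive entries indeed alternate. For the inductive step, I would assume every fraction in $SB_i$ satisfies one of the two parity congruences (and, in the alternating case, that consecutive entries switch between them) and then examine the two weighted mediants inserted between a consecutive pair $\frac{p}{q}, \frac{r}{s}$.

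The heart of the argument is a case analysis on the parities of $(p,q)$ and $(r,s)$, tracking what happens to the \emph{unreduced} mediants $\frac{2p+r}{2q+s}$ and $\frac{p+2r}{q+2s}$ and then accounting for reduction. Modulo $2$ we have $2p + r \equiv r$ and $2q + s \equiv s$, so the unreduced left mediant has the same parity pair as $\frac{r}{s}$; likewise $p + 2r \equiv p$, $q + 2s \equiv q$, so the unreduced right mediant has the same parity pair as $\frac{p}{q}$. The subtlety is that reduction by a factor $g$ can change parities. Here I would invoke Lemma~\ref{thm:cdetred}: $g \mid \mathcal{C}(\frac{p}{q}, \frac{r}{s}) = qr - ps$. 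One then checks that in each relevant parity configuration for $(p,q)$ and $(r,s)$ — noting that by induction each of these pairs is $\equiv (a,b)$ or $\equiv (c,d)$, and $\frac{a}{b}, \frac{c}{d}$ are in lowest terms so $(a,b) \neq (0,0)$ and $(c,d) \neq (0,0) \pmod 2$ — the cross-determinant $qr - ps$ is odd, forcing $g$ to be odd, so reduction preserves both parities. Concretely: if $(p,q) \equiv (r,s)$ then both are the same nonzero residue pair from $\{(a,b),(c,d)\}$, and a direct check of the three possibilities $(0,1),(1,0),(1,1)$ shows $qr-ps$ is odd; if $(p,q) \not\equiv (r,s)$, then one is $\equiv(a,b)$ and the other $\equiv(c,d)$ with $(a,b)\not\equiv(c,d)$, and again enumerating the three residue pairs shows $qr - ps \equiv 1 \pmod 2$. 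Thus in all cases $g$ is odd, the reduced left mediant inherits the parity pair of $\frac{r}{s}$, and the reduced right mediant inherits that of $\frac{p}{q}$, each of which is one of $(a,b), (c,d) \pmod 2$ by the inductive hypothesis.

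This simultaneously delivers the "moreover" clause. In the alternating case, consider a consecutive pair $\frac{p}{q}, \frac{r}{s}$ in $SB_i$ with, say, $(p,q)\equiv(a,b)$ and $(r,s)\equiv(c,d)$; after insertion the new local order is $\frac{p}{q}, \frac{p+2r}{q+2s}, \frac{2p+r}{2q+s}, \frac{r}{s}$, and by the parity computation above the middle two have parity pairs $(p,q)\equiv(a,b)$ and $(r,s)\equiv(c,d)$ respectively — wait, more carefully: the right mediant $\frac{p+2r}{q+2s}$ has the parity of $\frac{p}{q}$ and sits adjacent to $\frac{p}{q}$, so I should double-check the geometric placement. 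The left mediant $\frac{2p+r}{2q+s}$ is the one closer to $\frac{p}{q}$ in value, yet modulo $2$ it matches $\frac{r}{s}$; so the inserted sequence $\frac{p}{q}, \frac{2p+r}{2q+s}, \frac{p+2r}{q+2s}, \frac{r}{s}$ has parity pattern (using $A$ for $(a,b)$, $C$ for $(c,d)$) exactly $A, C, A, C$. Hence consecutive entries still alternate. Running this over every adjacent pair in $SB_i$ and using that $SB_i$ already alternated, the whole of $SB_{i+1}$ alternates, closing the induction.

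The main obstacle is the reduction step: a priori, dividing out a common factor could flip parities, so the argument genuinely relies on showing that the reducing factor $g$ is always odd. That is exactly where Lemma~\ref{thm:cdetred} and the hypothesis that $\frac{a}{b}, \frac{c}{d}$ are in lowest terms (so neither residue pair is $(0,0) \bmod 2$) do the work; the remaining verification that $qr - ps$ is odd in each parity configuration is a small finite check. I would present that check as a brief table or a couple of lines rather than belaboring it.
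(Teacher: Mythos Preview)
Your inductive framework and the parity bookkeeping for the alternation clause are correct and in fact more explicit than the paper's. However, there is a genuine error in your justification that the reducing factor $g$ is odd. You claim that when $(p,q)\equiv(r,s)\pmod 2$ and this common residue pair is one of $(0,1),(1,0),(1,1)$, a direct check shows $qr-ps$ is odd. This is false: in each of those three cases $qr-ps$ is \emph{even}. For instance, if both pairs are $\equiv(0,1)$ then $qr-ps\equiv 1\cdot 0-0\cdot 1\equiv 0\pmod 2$, and the other two cases are identical. So Lemma~\ref{thm:cdetred} tells you nothing about the parity of $g$ here. This case is not vacuous: whenever $(a,b)\equiv(c,d)\pmod 2$, the inductive hypothesis forces every fraction in the tree into that single parity class, so \emph{every} consecutive pair has $(p,q)\equiv(r,s)$.

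The paper sidesteps the cross-determinant entirely for this step and argues directly: since $\frac{r}{s}$ is in lowest terms, $r$ and $s$ are not both even, hence $2p+r$ and $2q+s$ are not both even, so $\gcd(2p+r,2q+s)$ is odd. The same reasoning applied to $\frac{p}{q}$ handles the right mediant. Replace your appeal to Lemma~\ref{thm:cdetred} with this one-line observation and the rest of your argument (including the alternation pattern $A,C,A,C$) goes through unchanged.
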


\begin{proof}
We prove by induction on $i$ that this holds for $SB_i(\frac{a}{b}, \frac{c}{d})$. Clearly the result holds when $i = 0$. Now suppose it holds when $i = n$, and consider $SB_{n + 1}(\frac{a}{b}, \frac{c}{d})$. All terms in this row were either also in $SB_{n}(\frac{a}{b}, \frac{c}{d})$, whence they satisfy the claim by the induction hypothesis, or the mediant of two consecutive terms $\frac{x}{y}, \frac{z}{w} \in SB_{n}(\frac{a}{b}, \frac{c}{d})$. The left mediant of these two fractions is the fraction $\frac{2x + z}{2y + w}$ in lowest terms. Notice that the numerator and denominator are not both even, since this would force $z$ and $w$ to both be even so that $\frac{z}{w}$ was not in lowest terms. Then whatever factor we reduce this fraction by must be odd, so that the parities of the numberator and denominator of the left mediant are $(2x + z, 2y + w) \equiv (z, w) \pmod{2}$. Yet by the induction hypothesis, $(z, w) \equiv (a, b) \pmod{2}$ or $(z, w) \equiv (c, d) \pmod{2}$. Thus the same is true for the left mediant of $\frac{x}{y}, \frac{z}{w}$, and by analogous reasoning, for the right mediant. The claim now follows by induction.
\end{proof}

\begin{corollary}\label{thm:parity}
If $(a, b) \not\equiv (c, d) \pmod{2}$, then $\mathcal{C}\left(\frac{a}{b}, \frac{c}{d}\right)$ is odd, and for any number $\frac{x}{y}$ in the tree one of the cross determinants $\mathcal{C}\left(\frac{a}{b}, \frac{x}{y}\right)$ or $\mathcal{C}\left(\frac{x}{y}, \frac{c}{d}\right)$ is odd.
\end{corollary}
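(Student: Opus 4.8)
The plan is to split the statement into its two assertions: handle the claim that $\mathcal{C}(a/b,c/d)$ is odd by a direct parity computation, and then deduce the statement about $x/y$ from Lemma~\ref{thm:modular}.

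First I would show that $\mathcal{C}\left(\frac{a}{b},\frac{c}{d}\right) = bc-ad$ is odd. Reducing modulo $2$, this quantity equals, up to a sign that is irrelevant mod $2$, the determinant of the matrix whose columns are $(a,b)$ and $(c,d)$ regarded as vectors in $(\mathbb{Z}/2\mathbb{Z})^2$. Because $\frac{a}{b}$ and $\frac{c}{d}$ are in lowest terms, neither of $(a,b)$, $(c,d)$ is the zero vector mod $2$; because $(a,b)\not\equiv(c,d)\pmod 2$, the two vectors are distinct. But $(\mathbb{Z}/2\mathbb{Z})^2$ has only three nonzero vectors, namely $(0,1),(1,0),(1,1)$, and any two distinct ones among them are linearly independent over $\mathbb{Z}/2\mathbb{Z}$. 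Hence the determinant is nonzero mod $2$, i.e.\ $bc-ad$ is odd. (Equivalently, one could simply enumerate the three possibilities for $(a,b)\bmod 2$ against the two remaining possibilities for $(c,d)\bmod 2$ and check the six resulting values of $bc-ad$.)

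Next, given any $\frac{x}{y}$ appearing in $SB\left(\frac{a}{b},\frac{c}{d}\right)$, Lemma~\ref{thm:modular} guarantees that $(x,y)\equiv(a,b)\pmod 2$ or $(x,y)\equiv(c,d)\pmod 2$. In the first case, $\mathcal{C}\left(\frac{x}{y},\frac{c}{d}\right) = yc-xd \equiv bc-ad \pmod 2$, which is odd by the previous paragraph. In the second case, $\mathcal{C}\left(\frac{a}{b},\frac{x}{y}\right) = bx-ay \equiv bc-ad \pmod 2$ is odd. Either way, at least one of the two cross-determinants is odd, as claimed.

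I do not expect a genuine obstacle here: once Lemma~\ref{thm:modular} is available the corollary is essentially bookkeeping. The only point requiring a little care is the first assertion, and the cleanest way to package it is exactly the observation above — that a fraction in lowest terms is \emph{primitive}, hence nonzero in $(\mathbb{Z}/2\mathbb{Z})^2$, so two such fractions with distinct parities form a basis and their cross-determinant is forced to be odd.
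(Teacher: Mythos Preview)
Your proof is correct and matches the paper's intent: the corollary is stated immediately after Lemma~\ref{thm:modular} with no proof given, so the paper treats it as an immediate consequence of that lemma together with the parity observation you spell out. Your linear-algebra packaging of the first assertion (two distinct nonzero vectors in $(\mathbb{Z}/2\mathbb{Z})^2$ are automatically independent) is a clean way to avoid the six-case check, but either argument is exactly the sort of detail the paper leaves to the reader.
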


For example, each row in the unit tree has fractions between $0/1$ and $1/1$ whose numerators alternate in terms of parity. Later, we will see that all the rational numbers with odd denominators in the range from 0 to 1 appear in the unit tree.

This is a good starting point; in many cases, such as the unit tree, the numbers which do not appear in the tree are precisely the ones forbidden by the lemma. Yet consider the tree $SB(\frac{1}{3}, \frac{3}{1})$: 

	\[ \frac{1}{3} \; \; \; \frac{3}{1}\]  \[\frac{1}{3} \; \; \; \frac{5}{7} \; \; \; \frac{7}{5} \; \; \; \frac{3}{1}\] 
	\[\frac{1}{3} \; \; \; \frac{7}{13} \; \; \;  \frac{11}{17} \; \; \; \frac{5}{7} \; \; \; \frac{17}{19} \; \; \; \frac{19}{17} \; \; \; \frac{7}{5} \; \; \; \frac{17}{11} \; \; \; \frac{13}{7} \; \; \; \frac{3}{1}\]
	
This tree has reciprocal symmetry about its midline; in particular, since each mediant operation produces two (distinct) fractions there is no way for the fraction $\frac{1}{1}$ to ever appear in this tree. Yet it is not ruled out by Lemma \ref{thm:modular}. To cover such cases as these, we need a refinement of Lemma \ref{thm:modular}. Let $\nu_{p}(n)$ denote the $p$-adic valuation of $n$.

\begin{lemma}\label{thm:2adic}
Let $\frac{a}{b}$ and $\frac{c}{d}$ be fractions in lowest terms. For all rational numbers $\frac{x}{y} \in SB(\frac{a}{b}, \frac{c}{d})$ 
$$\min \left(\nu_{2}\left(\mathcal{C}\left(\frac{a}{b}, \frac{x}{y}\right)\right),  \nu_{2}\left(\mathcal{C}\left(\frac{x}{y}, \frac{c}{d}\right)\right) \right) = \nu_{2}\left(\mathcal{C}\left(\frac{a}{b}, \frac{c}{d}\right)\right)\ \text{and}$$
$$\nu_{2}\left(\mathcal{C}\left(\frac{a}{b}, \frac{c}{d}\right)\right) < \max \left(\nu_{2}\left(\mathcal{C}\left(\frac{a}{b}, \frac{x}{y}\right)\right),  \nu_{2}\left(\mathcal{C}\left(\frac{x}{y}, \frac{c}{d}\right)\right) \right).$$ 
Moreover, if $\frac{p}{q}$ is an even-indexed fraction (where we start indexing with 0) in a row of the tree, then $\nu_{2} \left(\mathcal{C}\left(\frac{a}{b}, \frac{p}{q}\right)\right) > \nu_{2} \left(\mathcal{C}\left(\frac{p}{q}, \frac{c}{d}\right)\right)$. If instead $\frac{p}{q}$ has odd index, then $\nu_{2} \left(\mathcal{C}\left(\frac{a}{b}, \frac{p}{q}\right)\right) < \nu_{2} \left(\mathcal{C}\left(\frac{p}{q}, \frac{c}{d}\right)\right)$.
\end{lemma}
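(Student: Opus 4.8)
I would prove the lemma by induction on the row index $n$, strengthening it slightly to a single statement that subsumes the two displayed equalities/inequalities and the ``moreover'' clause at once. Set $v_0 := \nu_2(\mathcal C(\frac ab,\frac cd))$ and adopt the convention $\nu_2(0) = +\infty$ (this lets the two endpoints, whose cross-determinant with themselves is $0$, obey the same rule as the interior fractions, whose cross-determinants are all nonzero). The statement to be proved for every row $n$ is then: \emph{a fraction $\frac pq$ of even index $j$ in $SB_n(\frac ab,\frac cd)$ satisfies $\nu_2(\mathcal C(\frac pq,\frac cd)) = v_0 < \nu_2(\mathcal C(\frac ab,\frac pq))$, and a fraction of odd index satisfies the same with the two cross-determinants interchanged.} The base case $n=0$ is immediate, since $\frac ab$ sits at index $0$, $\frac cd$ at index $1$, and every cross-determinant in sight is either $\mathcal C(\frac ab,\frac cd)$ or $0$.

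For the inductive step I would rely on two facts. The first is that $\mathcal C$ is bilinear in the two ordered pairs $(p,q)$, $(r,s)$, so that — since the unreduced left and right mediants of $\frac pq$ and $\frac rs$ are the pairs $2(p,q)+(r,s)$ and $(p,q)+2(r,s)$ — one has identities such as
\[ \mathcal C\big(\tfrac ab,\,2(p,q)+(r,s)\big) = 2\,\mathcal C\big(\tfrac ab,\tfrac pq\big) + \mathcal C\big(\tfrac ab,\tfrac rs\big), \]
together with the analogues obtained by putting $\frac cd$ in the outer slot or by using the other mediant. The second fact is that, exactly as in the proof of Lemma~\ref{thm:modular}, the reduction factor $g$ of a weighted mediant of two fractions in lowest terms is always odd — otherwise the numerator and denominator of the unreduced mediant would both be even, forcing a parent fraction out of lowest terms — so passing to lowest terms divides each of these cross-determinants by an odd number and leaves its $2$-adic valuation unchanged.

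Granting these, the induction becomes bookkeeping. The fractions inherited from row $n$ move from index $j$ to index $3j$, preserving parity and leaving their cross-determinants with $\frac ab$ and $\frac cd$ unchanged, so the claim transfers to them. A new mediant comes from a consecutive pair $\frac pq$ (index $j$), $\frac rs$ (index $j+1$), with the left mediant landing at index $3j+1$ and the right at $3j+2$; since exactly one of $j$, $j+1$ is even, the inductive hypothesis tells me precisely which of $\nu_2(\mathcal C(\frac ab,\cdot))$, $\nu_2(\mathcal C(\cdot,\frac cd))$ equals $v_0$ and which is strictly larger for each parent. Feeding those valuations into the bilinearity identities, using $\nu_2(2x)=\nu_2(x)+1$ and $\nu_2(x+y)=\min(\nu_2 x,\nu_2 y)$ when $\nu_2 x\neq\nu_2 y$, and then dividing out the odd $g$, I would check in each case that the new mediant satisfies the required even/odd dichotomy at its index. (For instance, when $j$ is even, $2\,\mathcal C(\frac ab,\frac pq)$ has valuation $\ge v_0+2$ while $\mathcal C(\frac ab,\frac rs)$ has valuation exactly $v_0$, so the left mediant's cross-determinant with $\frac ab$ has valuation exactly $v_0$; a parallel computation shows its cross-determinant with $\frac cd$ has valuation $>v_0$, matching the odd index $3j+1$.)

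The step I expect to demand the most care is the \emph{strict} part of the inequalities: to conclude that a sum of two cross-determinants has valuation \emph{exactly} $v_0$ rather than merely $\ge v_0$, I need the two summands to have unequal valuations. This works out because the ``small'' summand has valuation exactly $v_0$ while the other, having been multiplied by $2$, has valuation at least $v_0+2$ — a clean gap that makes the conclusion stable, and is really why the weight $2$ (equivalently, $k=3$) behaves so nicely here. After carrying out one representative sub-case in full, I would obtain the remaining ones from the symmetry $\frac ab \leftrightarrow \frac cd$, $j \leftrightarrow j+1$, left mediant $\leftrightarrow$ right mediant.
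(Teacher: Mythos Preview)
Your proposal is correct and follows essentially the same approach as the paper: induction on the row index, the bilinearity identity $\mathcal C(\frac ab,\,2(p,q)+(r,s))=2\mathcal C(\frac ab,\frac pq)+\mathcal C(\frac ab,\frac rs)$, the observation (via Lemma~\ref{thm:modular}) that the reduction factor is odd, and the index-parity bookkeeping $j\mapsto 3j,\,3j+1,\,3j+2$. Your packaging is slightly cleaner than the paper's --- introducing $v_0$, adopting $\nu_2(0)=+\infty$ to absorb the endpoints into the same statement, and isolating the ``exact $v_0$ vs.\ $\ge v_0+2$'' gap that makes the strict inequality go through --- but the underlying argument is the same.
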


\begin{proof}
We prove by induction on $i$ that this holds for $SB_i(\frac{a}{b}, \frac{c}{d})$. Clearly the result holds when $i = 0$. Now suppose it holds when $i = n$, and consider $SB_{n + 1}(\frac{a}{b}, \frac{c}{d})$. All terms in this row are either also in $SB_{n}(\frac{a}{b}, \frac{c}{d})$, or else the mediant of two consecutive terms $\frac{p}{q}, \frac{r}{s} \in SB_{n}(\frac{a}{b}, \frac{c}{d})$.  In the first case, suppose the fraction occurs at index $I$ in $SB_{n}(\frac{a}{b}, \frac{c}{d})$; then in the next row $SB_{n + 1}(\frac{a}{b}, \frac{c}{d})$ it has index $3I \equiv I \pmod{2}$ whence the claim follows from the induction hypothesis. Now assume we are in the second case.

The left mediant of these two fractions is the fraction $\frac{2p + r}{2q + s}$ in lowest terms. As we saw in the proof of Lemma \ref{thm:modular}, the factor by which we reduce to lowest terms must be odd, and thus does not affect the $2$-adic valuation. Then we compute: 
$$\mathcal{C}\left(\frac{a}{b}, \frac{2p + r}{2q + s}\right) = 2(bp - aq) + (br - as) = 2\mathcal{C}\left(\frac{a}{b}, \frac{p}{q}\right) + \mathcal{C}\left(\frac{a}{b}, \frac{r}{s}\right)$$ 
and 
$$\mathcal{C}\left(\frac{2p + r}{2q + s}, \frac{c}{d}\right) = 2(cq - pd) + (cs - qd) = 2\mathcal{C}\left(\frac{p}{q}, \frac{c}{d}\right) + \mathcal{C}\left(\frac{r}{s}, \frac{c}{d}\right).$$ 

Suppose $\frac{p}{q}$ has even index in $SB_n(\frac{a}{b}, \frac{c}{d})$. Then $\nu_2 \left(\mathcal{C}(\frac{a}{b}, \frac{p}{q}) \right) > \nu_2\left(\mathcal{C}(\frac{a}{b}, \frac{c}{d})\right) = \nu_2\left(\mathcal{C}(\frac{a}{b}, \frac{r}{s}) \right)$ since $\frac{r}{s}$ has odd index ($\frac{p}{q}, \frac{r}{s}$ are consecutive). It follows that $\mathcal{C}\left(\frac{a}{b}, \frac{2p + r}{2q + s}\right) = \nu_2\left(\mathcal{C}(\frac{a}{b}, \frac{r}{s}) \right) =  \nu_2\left(\mathcal{C}(\frac{a}{b}, \frac{c}{d})\right)$. On the other hand, $\nu_2 \left(\mathcal{C}(\frac{p}{q}, \frac{c}{d}) \right) = \nu_2\left(\mathcal{C}(\frac{a}{b}, \frac{c}{d})\right) < \nu_2\left(\mathcal{C}(\frac{r}{s}, \frac{c}{d}) \right)$ so $\mathcal{C}\left(\frac{2p + r}{2q + s}, \frac{c}{d}\right)$ is at least $1 + \nu_2\left(\mathcal{C}(\frac{a}{b}, \frac{c}{d})\right) > \nu_2\left(\mathcal{C}(\frac{a}{b}, \frac{c}{d})\right)$. 

Instead if $\frac{p}{q}$ has odd index in $SB_n(\frac{a}{b}, \frac{c}{d})$, then $\nu_2 \left(\mathcal{C}(\frac{a}{b}, \frac{p}{q}) \right) = \nu_2\left(\mathcal{C}(\frac{a}{b}, \frac{c}{d})\right) < \nu_2\left(\mathcal{C}(\frac{a}{b}, \frac{r}{s}) \right)$ since $\frac{r}{s}$ has even index ($\frac{p}{q}, \frac{r}{s}$ are consecutive). It follows that $\mathcal{C}\left(\frac{a}{b}, \frac{2p + r}{2q + s}\right)$ is at least $1 + \nu_2\left(\mathcal{C}(\frac{a}{b}, \frac{p}{q}) \right) >  \nu_2\left(\mathcal{C}(\frac{a}{b}, \frac{c}{d})\right)$. On the other hand, $\nu_2 \left(\mathcal{C}(\frac{p}{q}, \frac{c}{d}) \right) > \nu_2\left(\mathcal{C}(\frac{a}{b}, \frac{c}{d})\right) = \nu_2\left(\mathcal{C}(\frac{r}{s}, \frac{c}{d}) \right)$ so $\mathcal{C}\left(\frac{2p + r}{2q + s}, \frac{c}{d}\right)$ is $ \nu_2\left(\mathcal{C}(\frac{r}{s}, \frac{c}{d})\right) = \nu_2\left(\mathcal{C}(\frac{a}{b}, \frac{c}{d})\right)$.

In either case, if $\frac{p}{q}$ has index $I$ in $SB_n(\frac{a}{b}, \frac{c}{d})$ then the left mediant of $\frac{p}{q}, \frac{r}{s}$ has index $3I + 1$ which is of opposite parity, and so the claim holds. The reasoning for the right mediant of $\frac{p}{q}, \frac{r}{s}$ is entirely analogous, and we are done by induction.
\end{proof}

Now let us characterize those numbers which appear in $SB(\frac{a}{b}, \frac{c}{d})$; we turn this into a precise description afterwards. Before we do so, we need a short technical lemma.

\begin{lemma}\label{thm:onethird}
The difference between the left and right mediants of $\frac{p}{q}$ and $\frac{r}{s}$ is at most one third of the difference between $\frac{p}{q}$ and $\frac{r}{s}$.
\end{lemma}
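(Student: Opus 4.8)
The plan is to compute both quantities in closed form and reduce the claimed inequality to a perfect square. First, since reducing a fraction to lowest terms does not change its value, for the purpose of measuring the difference of the two mediants I may work with the unreduced forms $\frac{2p+r}{2q+s}$ and $\frac{p+2r}{q+2s}$. The statement is symmetric under swapping $\frac{p}{q}$ and $\frac{r}{s}$: that swap interchanges the left and right mediants and leaves both differences unchanged in absolute value, so I may assume $\frac{p}{q} < \frac{r}{s}$, i.e. $qr - ps > 0$. If one of $q, s$ is zero then the right-hand side of the claimed bound is infinite (the endpoints differ by $\infty$) and there is nothing to prove, so I may also assume $q, s > 0$.

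Next I would expand the difference of the mediants. A direct expansion gives
\[
\frac{p+2r}{q+2s} - \frac{2p+r}{2q+s} = \frac{(p+2r)(2q+s) - (2p+r)(q+2s)}{(q+2s)(2q+s)} = \frac{3(qr-ps)}{(q+2s)(2q+s)},
\]
the terms $2pq$ and $2rs$ cancelling in the numerator. On the other hand, $\frac{r}{s} - \frac{p}{q} = \frac{qr-ps}{qs}$. Dividing the first quantity by the second, the positive factor $qr-ps$ cancels, and the assertion that the ratio is at most $\tfrac13$ becomes
\[
\frac{3qs}{(q+2s)(2q+s)} \le \frac13, \qquad \text{equivalently} \qquad 9qs \le (q+2s)(2q+s) = 2q^2 + 5qs + 2s^2 .
\]

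Finally, rearranging, the last inequality is exactly $0 \le 2q^2 - 4qs + 2s^2 = 2(q-s)^2$, which always holds, with equality precisely when $q = s$ (for instance the mediants $\tfrac13, \tfrac23$ of $\tfrac01, \tfrac11$ realize the bound). I do not anticipate any genuine obstacle: the argument is a one-line algebraic identity together with the non-negativity of a square. The only points needing care are the sign bookkeeping — ensuring the difference of mediants is taken in the order that makes the cancellation of $qr-ps$ legitimate — and the trivial dismissal of the degenerate case in which an endpoint is $\infty$.
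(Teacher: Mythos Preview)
Your proof is correct and follows essentially the same route as the paper: compute the mediant difference as $\dfrac{3(qr-ps)}{(q+2s)(2q+s)}$, compare with $\dfrac{qr-ps}{qs}$, and reduce to $9qs \le (q+2s)(2q+s)$, i.e.\ $2(q-s)^2 \ge 0$. Your extra care with signs, the degenerate $q=0$ or $s=0$ case, and the equality case $q=s$ is welcome but not a different method.
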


\begin{proof}
Of course, the possibility that the mediants are reduced is irrelevant; the value of the numbers is unchanged. Thus we compute $$\frac{p + 2r}{q + 2s} - \frac{2p + r}{2q + s} = \frac{3(qr - ps)}{(q + 2s)(2q + s)}.$$
Now $$\frac{r}{s} - \frac{p}{q} = \frac{qr - ps}{qs} = \frac{3(qr - qs)}{3qs}$$ whence it is enough to show that $$9qs \le (q + 2s)(2q + s) = 2q^2 + 2s^2 + 5qs \iff 2(q - s)^2 \ge 0$$ which is clear, and so we are done.
\end{proof}

We can now characterize numbers which appear in a particular tree $SB(\frac{a}{b}, \frac{c}{d})$. As we will see, it is actually more natural to characterize numbers which do \emph{not} appear in this tree. We have the following characterization:

\begin{theorem}\label{thm:numbersappear}
If a number $\frac{x}{y} \in [\frac{a}{b}, \frac{c}{d}]$ does not appear in $SB(\frac{a}{b}, \frac{c}{d})$, then $\frac{x}{y}$ is the mediant of two consecutive terms in some row of the tree.
\end{theorem}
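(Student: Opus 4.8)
The plan is to follow the shrinking family of sub-intervals that contain $\frac{x}{y}$ and to argue that the modulus of $\frac{x}{y}$ with respect to them cannot keep decreasing; once it stabilizes, the structure of the tree around $\frac{x}{y}$ becomes rigid enough to pin down $\frac{x}{y}$ exactly. Concretely, since $\frac{x}{y}$ never appears, in each row $i$ it lies strictly between some consecutive pair, so there is a unique sub-interval $I_i = [\frac{p_i}{q_i}, \frac{r_i}{s_i}]$ of that row containing it in its interior, and $I_0 \supseteq I_1 \supseteq \cdots$. Set $m_i = m_{I_i}(x/y)$, a positive integer, and abbreviate $A = \mathcal{C}(\frac{p_i}{q_i}, \frac{x}{y})$ and $B = \mathcal{C}(\frac{x}{y}, \frac{r_i}{s_i})$, so that $m_i = A + B$ with $A, B \ge 1$.

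Passing from $I_i$ to $I_{i+1}$, I would expand $\mathcal{C}(\frac{x}{y}, \cdot)$ against each of the two reduced weighted mediants of $\frac{p_i}{q_i}, \frac{r_i}{s_i}$ in terms of $A$ and $B$, carrying along the reduction factors $g_1, g_2 \ge 1$ supplied by Lemma~\ref{thm:cdetred}. The outcome is: if $\frac{x}{y}$ drops into the left or the right sub-interval, then $m_{i+1} \le m_i - 2 < m_i$; and if it drops into the middle sub-interval, then $m_{i+1} = \frac{2A - B}{g_1} + \frac{2B - A}{g_2}$, where $2A - B$ and $2B - A$ are positive integers (because $\frac{x}{y}$ lies strictly between the two mediants and is not itself a term of the tree), so this equals $m_i$ when $g_1 = g_2 = 1$ and is strictly smaller otherwise. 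Hence $(m_i)$ is a non-increasing sequence of positive integers; it is eventually constant, say for all $i \ge N$, and by the dichotomy just described every step past row $N$ consists of descending into the middle sub-interval with both weighted mediants already in lowest terms.

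Now I would invoke two facts. First, by Lemma~\ref{thm:onethird} a middle sub-interval is at most one third as long as its parent, so the lengths of $I_N \supseteq I_{N+1} \supseteq \cdots$ tend to $0$ and their intersection is the single point $\frac{x}{y}$. Second, a direct computation shows that when no reduction occurs the ordinary (unweighted) mediant of the endpoints of the middle sub-interval $\big[\frac{2p + r}{2q + s}, \frac{p + 2r}{q + 2s}\big]$ equals $\frac{p + r}{q + s}$, the ordinary mediant of the endpoints $\frac pq, \frac rs$ of the parent, and that $\frac{p+r}{q+s}$ lies strictly between these two weighted mediants. Consequently this value is the same for every $I_i$ with $i \ge N$, namely $\frac{p_N + r_N}{q_N + s_N}$, and it lies strictly inside each such $I_i$; so it too must be the common intersection point. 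Therefore $\frac{x}{y} = \frac{p_N + r_N}{q_N + s_N}$ is the mediant of the consecutive terms $\frac{p_N}{q_N}$ and $\frac{r_N}{s_N}$ of row $N$.

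The main obstacle is the middle-sub-interval analysis: one must establish that the \emph{only} descent that fails to strictly decrease the modulus is ``middle sub-interval, no reduction,'' which requires disposing of the borderline sub-cases --- $2A = B$, $2B = A$, or a weighted mediant reducing exactly onto $\frac{x}{y}$ --- each of which would force $\frac{x}{y}$ to coincide with a term of the tree, contradicting the hypothesis. Granting that, the invariance of the ordinary mediant under ``take the middle sub-interval'' is the structural observation that closes the argument, with Lemma~\ref{thm:onethird} furnishing the needed convergence. Everything else is routine cross-determinant bookkeeping.
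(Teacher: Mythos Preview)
Your proposal is correct and follows essentially the same route as the paper: track the modulus $m_{I_i}(x/y)$ along the nested intervals, show it strictly drops except when $x/y$ lands in the middle third with no reduction, conclude that this eventually happens forever, and then identify the unique intersection point of the shrinking intervals (via Lemma~\ref{thm:onethird}) with the invariant ordinary mediant $\frac{p_N+r_N}{q_N+s_N}$. Your write-up is in fact a bit more explicit than the paper's---you give the closed form $m_{i+1}=\frac{2A-B}{g_1}+\frac{2B-A}{g_2}$ for the middle case and flag the borderline sub-cases---but the underlying argument is identical.
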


\begin{remark}
The mediant we speak of here is the ordinary mediant which appears in the case $k = 2$; thus the mediant of $\frac{p}{q}$ and $\frac{r}{s}$ is $\frac{p + r}{q + s}$. 
\end{remark}

\begin{proof}
Assuming that $\frac{x}{y} \in [\frac{a}{b}, \frac{c}{d}]$ does not appear in the tree, we can find a sequence $\{I_n\}_{n \ge 0}$ with $I_0 = [\frac{a}{b}, \frac{c}{d}]$ and $I_n \supset I_{n+ 1}$ so that $x/y \in I_n$ and $I_n$ is the interval between two consecutive terms of the row $SB_{n}(\frac{a}{b}, \frac{c}{d})$. Once we have found the endpoints $\frac{a_n}{b_n}$, $\frac{c_n}{d_n}$ of $I_n$, the next row of the tree divides the interval $I_n = [a_n/b_n, c_n/d_n]$ into three sub-intervals, namely 
$$I_n = \left[\frac{a_n}{b_n}, \frac{2a_n + c_n}{2b_n + d_n}\right] \cup \left[\frac{2a_n + c_n}{2b_n + d_n}, \frac{a_n + 2c_n}{b_n + 2d_n}\right] \cup \left[\frac{a_n + 2c_n}{b_n + 2d_n}, \frac{c_n}{d_n}\right].$$

Now consider $m_{I_n}(\frac{x}{y})$. If $I_{n + 1}$ is the first segment: $[a_n/b_n, e/f]$, where $e/f = (2a_n + c_n)/(2b_n + d_n)$, or its reduced form, then 
$$m_{I_{n + 1}}\left(\frac{x}{y}\right) \leq (xb_n - ya_n) + (2a_n + c_n)y - (2b_n + d_n)x = m_{I_n}\left(\frac{x}{y}\right) - 2(xb_n - ya_n) < m_{I_n}\left(\frac{x}{y}\right).$$

The same is true if $I_{n + 1} = [(a_n + 2c_n)/(b_n + 2d_n), c_n/d_n]$ is the last segment (where the first number might be reduced). 

Finally if $I_{n + 1}$ is the middle interval, then 
$$m_{I_{n + 1}}\left(\frac{x}{y}\right) \leq  x(2b_n+d_n) - y(2a_n+c_n) + (a_n+2c_n)y - (b_n+2d_n)x = m_{I_{n}}\left(\frac{x}{y}\right).$$  

Equality holds only when there is no reduction. Thus $x/y$ falls into the middle interval, the endpoints of which are never reduced, all but finitely many times. Equivalently, there is some interval $I_N = [a_N/b_N, c_N/d_N] \ni x/y$ such that in every following row of the tree $x/y$ lies in the middle one of the three new intervals created and the endpoints are not reduced. These intervals are nested closed intervals with diameter tending to $0$ (by Lemma \ref{thm:onethird}), so their intersection is a single point. This point is $(a_N + c_N)/(b_N + d_N)$; indeed, the mediant of two fractions always lies between them, and $\frac{a_N + c_N}{b_N + d_N}$ is the mediant of every $I_n$, $n \ge N$. It follows that $x/y = (a_N + c_N)/(b_N + d_N)$ is the mediant of two consecutive terms in $SB(\frac{a}{b}, \frac{c}{d})$. 
\end{proof}

We can now turn this characterization into an explicit description of the fractions which appear in $SB(\frac{a}{b}, \frac{c}{d})$ using Lemmas \ref{thm:modular} and \ref{thm:2adic}. In particular, the criteria of Lemmas \ref{thm:modular} and \ref{thm:2adic} are both necessary and sufficient. 

\begin{theorem}\label{thm:containsallrational}
Let $\frac{a}{b}$ and $\frac{c}{d}$ be fractions in lowest terms. The Stern-Brocot tree $SB(\frac{a}{b}, \frac{c}{d})$ contains all rational numbers $\frac{x}{y}$ between $\frac{a}{b}$ and $\frac{c}{d}$ which satisfy the following conditions:

\begin{itemize}
\item $(x, y) \equiv (a, b) \pmod{2}$ or $(x, y) \equiv (c, d) \pmod{2}$
\item  $\min \left(\nu_{2}\left(\mathcal{C}\left(\frac{a}{b}, \frac{x}{y}\right)\right),  \nu_{2}\left(\mathcal{C}\left(\frac{x}{y}, \frac{c}{d}\right)\right) \right) = \nu_{2}\left(\mathcal{C}\left(\frac{a}{b}, \frac{c}{d}\right)\right) < \max \left(\nu_{2}\left(\mathcal{C}\left(\frac{a}{b}, \frac{x}{y}\right)\right),  \nu_{2}\left(\mathcal{C}\left(\frac{x}{y}, \frac{c}{d}\right)\right) \right)$
\end{itemize}

\end{theorem}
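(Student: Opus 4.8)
The plan is to prove this by contradiction, using Theorem~\ref{thm:numbersappear} to localize the problem to the endpoints of a single interval deep in the tree. Suppose $\frac{x}{y}$, written in lowest terms, satisfies both bulleted conditions but does not appear in $SB(\frac{a}{b},\frac{c}{d})$. By Theorem~\ref{thm:numbersappear}, and more precisely by the nested-interval construction in its proof, there are a row index $N$ and an interval $I_N=[\frac{a_N}{b_N},\frac{c_N}{d_N}]$ whose endpoints are consecutive fractions of $SB_N(\frac{a}{b},\frac{c}{d})$ and for which $\frac{x}{y}$ equals the ordinary $k=2$ mediant $\frac{a_N+c_N}{b_N+d_N}$ as a rational number. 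Writing $g=\gcd(a_N+c_N,\,b_N+d_N)$, a one-line computation gives
$$\mathcal{C}\!\left(\frac{a_N}{b_N},\frac{x}{y}\right)=\frac{b_Nc_N-a_Nd_N}{g}=\mathcal{C}\!\left(\frac{x}{y},\frac{c_N}{d_N}\right),$$
so these two cross-determinants are \emph{equal}; in particular they have the same $2$-adic valuation.

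The key step is a hereditary property of the two conditions: if $\frac{x}{y}$ satisfies them relative to the endpoints of an interval $J$ cut out by two consecutive fractions in some row of the tree, and $J'$ is whichever of the three sub-intervals of the following row contains $\frac{x}{y}$, then $\frac{x}{y}$ satisfies the same two conditions relative to the endpoints of $J'$. Granting this, an induction along the chain $[\frac{a}{b},\frac{c}{d}]=I_0\supset I_1\supset\cdots\supset I_N$ shows that $\frac{x}{y}$ satisfies the second condition relative to $\frac{a_N}{b_N}$ and $\frac{c_N}{d_N}$; but that condition asserts precisely that $\nu_2(\mathcal{C}(\frac{a_N}{b_N},\frac{x}{y}))\ne\nu_2(\mathcal{C}(\frac{x}{y},\frac{c_N}{d_N}))$, contradicting the displayed equality. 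Hence $\frac{x}{y}$ does appear in $SB(\frac{a}{b},\frac{c}{d})$.

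To prove the hereditary property I would argue as in Lemmas~\ref{thm:modular} and \ref{thm:2adic}. Let $\frac{p}{q},\frac{r}{s}$ be the endpoints of $J$ and put $L=\mathcal{C}(\frac{p}{q},\frac{x}{y})$, $R=\mathcal{C}(\frac{x}{y},\frac{r}{s})$. As noted in the proof of Lemma~\ref{thm:modular}, the factors by which the two weighted mediants of $\frac{p}{q},\frac{r}{s}$ reduce are odd, hence irrelevant to $\nu_2$. Using the cross-determinant identities from the proof of Lemma~\ref{thm:2adic}, the two cross-determinants of $\frac{x}{y}$ with the endpoints of $J'$ have the same $2$-adic valuations as $\{L,\, R-2L\}$ when $J'$ is the left sub-interval, as $\{2L-R,\, 2R-L\}$ when $J'$ is the middle one, and as $\{L-2R,\, R\}$ when $J'$ is the right one; meanwhile $\mathcal{C}$ of the two endpoints of $J'$ has the same valuation as $\mathcal{C}(\frac{p}{q},\frac{r}{s})$ in the side cases and as $3\,\mathcal{C}(\frac{p}{q},\frac{r}{s})$ — hence again as $\mathcal{C}(\frac{p}{q},\frac{r}{s})$ — in the middle case, so the reference valuation $\kappa$, the $2$-adic valuation of the cross-determinant of the two endpoints, is the same for $J'$ as for $J$. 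The first condition transfers at once, since $2a+c\equiv c$ and $a+2c\equiv a\pmod 2$ show the pair of parity classes of the endpoints is preserved up to order. For the second, the hypothesis on $J$ says exactly one of $\nu_2(L),\nu_2(R)$ equals $\kappa$ and the other exceeds it; a short case split then gives the same dichotomy for the new pair — for example if $\nu_2(L)=\kappa<\nu_2(R)$ then $\nu_2(2L-R)\ge\kappa+1$ while $\nu_2(2R-L)=\nu_2(L)=\kappa$, and the other cases are symmetric — which is exactly the second condition for $J'$.

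The main obstacle is this $2$-adic bookkeeping inside the hereditary property: one must verify, separately for each of the three sub-interval operations, both that the reference valuation $\kappa$ stays put and that the ``$\min=\kappa$, $\max>\kappa$'' shape of the second condition is inherited. The remaining ingredients — the mediant identity at row $N$, the parity preservation, and the structural input of Theorem~\ref{thm:numbersappear} that a missing rational is eventually trapped, without any reduction, in the middle sub-interval forever — are routine. I would also remark that Lemmas~\ref{thm:modular} and \ref{thm:2adic} already establish that the two conditions are necessary, so together with this theorem they give an exact characterization of which rationals occur in $SB(\frac{a}{b},\frac{c}{d})$.
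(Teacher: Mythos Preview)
Your argument is correct, but it is organized differently from the paper's. Both proofs start from Theorem~\ref{thm:numbersappear}: a missing rational must be the ordinary mediant of two consecutive tree fractions. The paper then works in the \emph{fixed} reference frame of the starting pair $\frac{a}{b},\frac{c}{d}$: it applies Lemma~\ref{thm:2adic} to the two consecutive fractions $\frac{p}{q},\frac{r}{s}$ to compute $\nu_2(\mathcal{C}(\frac{a}{b},\frac{p+r}{q+s}))$ and $\nu_2(\mathcal{C}(\frac{p+r}{q+s},\frac{c}{d}))$ directly, and shows that the reduced mediant either violates the parity condition (when the reduction factor is odd) or the $2$-adic condition (when it is even). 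Your route instead \emph{moves} the reference frame: you prove a hereditary lemma---that both bulleted conditions descend from an interval to whichever of its three children contains $\frac{x}{y}$---and push the hypotheses down to $I_N$, where the contradiction is the one-line observation that the mediant of $\frac{a_N}{b_N},\frac{c_N}{d_N}$ has \emph{equal} cross-determinants with the two endpoints. The paper's approach is shorter because Lemma~\ref{thm:2adic} has already done the global $2$-adic bookkeeping; your hereditary lemma essentially reproves a local version of that lemma, but in exchange the endgame is cleaner (no case split on the parity of the reduction factor) and the argument is more self-contained. The case analysis you sketch for the hereditary step---e.g.\ if $\nu_2(L)=\kappa<\nu_2(R)$ then $\nu_2(2L-R)\ge\kappa+1$ while $\nu_2(2R-L)=\kappa$---is exactly what is needed and goes through in all six sub-cases, and your observation that the reduction factors of the weighted mediants are odd (so $\kappa$ is unchanged at each step) is justified by the proof of Lemma~\ref{thm:modular}.
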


\begin{proof}
By Lemma \ref{thm:modular} and Lemma \ref{thm:2adic}, these conditions are necessarily satisfied by $\frac{x}{y} \in SB(\frac{a}{b}, \frac{c}{d})$. On the other hand, by Theorem~\ref{thm:numbersappear} we know the only rational numbers between $\frac{a}{b}$, $\frac{c}{d}$ that fail to appear in $SB(\frac{a}{b}$ and $\frac{c}{d})$ are the mediants of consecutive terms in the tree. 

Now consider the mediant $\frac{p + r}{q + s}$ of two consecutive terms $\frac{p}{q}$ and $\frac{r}{s}$ in $SB(\frac{a}{b}, \frac{c}{d})$. It is easy to see that unless this mediant is reduced by an even factor, it fails to meet the first condition of the theorem. Indeed, by Lemma \ref{thm:modular} we have $(p, q) \equiv (a, b)$ or $(p, q) \equiv (c, d)$ modulo $2$, and the same for $(r, s)$. As the equivalencies alternate $(p + r, q + s) \equiv (a + c, b + d) \pmod{2}$. The mediant fraction cannot satisfy the first condition of the theorem unless either $p$ and $q$ are both even or $r$ and $s$ are both even. Either way, this contradicts the fact that fractions have been reduced to lowest terms.

Note that $$\mathcal{C}\left(\frac{a}{b}, \frac{p + r}{q + s}\right)= \mathcal{C}\left(\frac{a}{b}, \frac{p}{q}\right) + \mathcal{C}\left(\frac{a}{b}, \frac{r}{s}\right).$$
Since, $\nu_2(\mathcal{C}(\frac{a}{b}, \frac{p}{q})) \neq \nu_2( \mathcal{C}(\frac{a}{b}, \frac{r}{s}))$ and $\min(\nu_2(\mathcal{C}(\frac{a}{b}, \frac{p}{q})),\nu_2( \mathcal{C}(\frac{a}{b}, \frac{r}{s}))) = \nu_2(\mathcal{C}(\frac{a}{b}, \frac{c}{d}))$, we have
$$\nu_{2}\left(\mathcal{C}\left(\frac{a}{b}, \frac{p + r}{q + s}\right)\right) = \nu_2\left(\mathcal{C}\left(\frac{a}{b}, \frac{c}{d}\right)\right).$$
Similarly, 
$$\nu_{2}\left(\mathcal{C}\left(\frac{p + r}{q + s},\frac{c}{d}\right)\right) = \nu_2\left(\mathcal{C}\left(\frac{a}{b}, \frac{c}{d}\right)\right).$$
Taking the reduction into account we see that 
$$\max \left( \nu_{2}\left(\mathcal{C}\left(\frac{a}{b}, \frac{p + r}{q + s}\right)\right), \nu_{2}\left(\mathcal{C}\left(\frac{p + r}{q + s}, \frac{c}{d}\right)\right)\right) < \nu_2\left(\mathcal{C}\left(\frac{a}{b}, \frac{c}{d}\right)\right).$$
The theorem follows.
\end{proof}

Consider for example the tree $SB(\frac{1}{3},\frac{3}{1})$ we discussed above. The cross-determinant of the initial terms is 8: $\mathcal{C}\left(\frac{1}{3}, \frac{3}{1}\right)=8$. It follows from the theorem that the numbers in the tree are all numbers $\frac{x}{y}$ in the range from $\frac{1}{3}$ to $\frac{3}{1}$ with odd numerators and denominators such that 8 divides $3x-y$ and $3y-x$.

As another example, consider the unit tree. We have already seen that numerators in each row alternate in parity and denominators are always odd. This means that the mediant of two consecutive numbers in a row has an odd numerator and an even denominator, and by Theorem \ref{thm:numbersappear} these are exactly the numbers that do not appear in the tree. It follows that any rational number between 0 and 1 with an odd denominator (in lowest terms) appears in the tree.

\section{Reduction}\label{sec:reduction}

So far we have stipulated, according to the definition of the tree, that all fractions should appear in lowest terms. This condition was important; for instance, the second row of  $SB(\frac{0}{1}, \frac{1}{1})$ contains the two consecutive entries $1/3$ and $4/9$. Their weighted mediants before reduction are $6/15$ and $9/21$, both of which are reducible by 3 and the new entries in the third row are $2/5$ and $3/7$. In this section, we consider what happens when we relax this assumption.

In the unit $k=2$ Stern-Brocot tree, fractions are always in lowest terms. Indeed, the cross-determinant is uniformly 1 in the unit $k = 2$ tree, and the factor by which fractions are reduced must divide their cross-determinants \cite{PRIMES}.

In the case $k = 3$, on the other hand, reduction is unavoidable. That is, regardless of the choice of starting terms there will be mediants which need to be reduced. To see why, consider the Stern-Brocot tree $SB(\frac{a}{b}, \frac{c}{d})$, and suppose no fractions were reduced in the first two rows. Then the fractions $\frac{2a + c}{2b + d}$ and $\frac{5a + 4c}{5b + 4d}$ appear consecutively in the second row, and their mediants $\frac{9a + 6c}{9b + 6d}$, $\frac{12a + 9c}{12b + 9d}$ are both reducible.

Let $R$ stand for a reduction scheme (a rule for how we reduce reducible fractions that appear in certain positions of the tree) and say the Stern-Brocot tree $SB(\frac{a}{b}, \frac{c}{d}, R)$ is generated exactly as the tree $SB(\frac{a}{b}, \frac{c}{d})$, except that reducible fractions are reduced according to $R$. We will use $R_u$ to represent uniform reduction to lowest term, so that all the above results are with respect to this reduction scheme. We also use $R_0$ to represent no reduction. These are the two most natural reduction schemes. Reduction schemes can in general be quite complex; for instance, we could flip a fair coin to decide whether or not to reduce a particular fraction, and make this choice independently for each fraction.

We now generalize Theorem \ref{thm:containsallrational} for a general reduction scheme. As we shall see, the proof proceeds almost identically once we critically examine which steps in the proof for $R = R_{u}$ depend, perhaps implicitly, on the reduction scheme and strengthen the necessary hypotheses. 

\begin{theorem}\label{thm:withred}
Let $\frac{a}{b}$ and $\frac{c}{d}$ be fractions (in lowest terms) and $R$ a reduction scheme. If neither $a \equiv b \equiv 0 \pmod{2}$ nor $c \equiv d \equiv 0 \pmod{2}$ (in particular, this is true if the starting terms are in lowest terms), then the Stern-Brocot tree $SB(\frac{a}{b}, \frac{c}{d}, R)$ contains a unique fraction representative of each one of the rational numbers $\frac{x}{y}$ (in lowest terms) between $\frac{a}{b}$ and $\frac{c}{d}$ which satisfy the following conditions:

\begin{itemize}
\item $(x, y) \equiv (a, b) \pmod{2}$ or $(x, y) \equiv (c, d) \pmod{2}$
\item  $\min \left(\nu_{2}\left(\mathcal{C}\left(\frac{a}{b}, \frac{x}{y}\right)\right),  \nu_{2}\left(\mathcal{C}\left(\frac{x}{y}, \frac{c}{d}\right)\right) \right) = \nu_{2}\left(\mathcal{C}\left(\frac{a}{b}, \frac{c}{d}\right)\right) < \max \left(\nu_{2}\left(\mathcal{C}\left(\frac{a}{b}, \frac{x}{y}\right)\right),  \nu_{2}\left(\mathcal{C}\left(\frac{x}{y}, \frac{c}{d}\right)\right) \right).$
\end{itemize}

\end{theorem}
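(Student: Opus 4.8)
The plan is to mirror the proof of Theorem~\ref{thm:containsallrational}, isolating the two places where uniform reduction $R_u$ was used implicitly and checking that the argument survives for an arbitrary $R$. The necessity of the two bullet conditions is the easy direction: Lemmas~\ref{thm:modular} and~\ref{thm:2adic} were proved by induction on rows, and the only property of reduction used there was that the reducing factor is \emph{odd} (because a weighted mediant of two lowest-terms fractions cannot have both numerator and denominator even). That fact is independent of which reduction scheme we use, so both lemmas hold verbatim for $SB(\frac ab,\frac cd,R)$, and hence every fraction appearing in the tree — regardless of $R$ — satisfies the parity condition and the $2$-adic condition. The hypothesis that neither $(a,b)$ nor $(c,d)$ is $(0,0)\bmod 2$ is exactly what guarantees the starting mediants are not both-even, so the induction gets off the ground; this is the role of that slightly weakened hypothesis replacing "lowest terms."

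For the sufficiency direction I would re-run the proof of Theorem~\ref{thm:numbersappear}: given a target $\frac xy$ in the interval satisfying both conditions, build the nested sequence of intervals $I_n = [\frac{a_n}{b_n},\frac{c_n}{d_n}]$ between consecutive tree entries containing $\frac xy$, and track the modulus $m_{I_n}(\frac xy)$. The key computation is unchanged: landing in the left or right sub-interval strictly decreases the modulus, landing in the middle leaves it unchanged \emph{provided the endpoints are not reduced}, and any reduction of an endpoint strictly decreases the modulus of any interior point (since reducing $\frac{p}{q}$ to $\frac{p/g}{q/g}$ divides the relevant cross-determinant by $g>1$). Since the modulus is a positive integer it can only drop finitely often, so from some row $N$ onward $\frac xy$ always lies in the middle interval and the endpoints are never again reduced; by Lemma~\ref{thm:onethird} the diameters shrink to $0$, forcing $\frac xy = (a_N+c_N)/(b_N+d_N)$, the ordinary $k=2$ mediant of $\frac{a_N}{b_N}$ and $\frac{c_N}{d_N}$. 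Finally, exactly as in the proof of Theorem~\ref{thm:containsallrational}, one checks that such an ordinary mediant has $(p+r,q+s)\equiv(a+c,b+d)\bmod 2$ and $\nu_2$ of both its cross-determinants with the endpoints equal to $\nu_2(\mathcal{C}(\frac ab,\frac cd))$; for it to satisfy the theorem's two conditions it would have to be reduced by an even factor, contradicting that $\frac xy$ is in lowest terms. (Note that $\frac xy$ being reducible by an even factor is ruled out because $\frac xy$ was assumed to be in lowest terms, not because of anything about $R$.) Hence every $\frac xy$ meeting the conditions does appear.

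The genuinely new point compared to the $R=R_u$ case is the word \textbf{unique} in the statement, and this is where I expect the main obstacle. Under $R_u$ each rational appears in exactly one representative automatically; under a general $R$ we must argue that a given rational $\frac xy$ cannot show up twice in the tree with (possibly) different unreduced representatives. For this I would use the structural fact, already extracted above, that once the interval chain around $\frac xy$ reaches the stage where $\frac xy$ equals the mediant of the current endpoints, $\frac xy$ itself is the \emph{next} entry inserted and thereafter is a permanent fixed entry — but more to the point, I would argue that the position of $\frac xy$ in the tree is forced: at each row $\frac xy$ lies strictly inside a unique interval $I_n$ between consecutive entries, and $\frac xy$ enters the tree precisely at the first row where it becomes an endpoint, which happens at a determined row and index. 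Two occurrences would require $\frac xy$ to be simultaneously an interior point of two disjoint intervals, which is impossible. The only subtlety is that a reduction scheme might produce an unreduced representative of $\frac xy$ as a weighted mediant before $\frac xy$ would "naturally" appear as an ordinary mediant endpoint; but Lemma~\ref{thm:cdetred} plus the $2$-adic bookkeeping show the value of any weighted mediant along the way is pinned down by the endpoints of its parent interval, so its numerical value is determined by the interval chain and cannot coincide with an interior target prematurely. Assembling these observations gives uniqueness, and the theorem follows.
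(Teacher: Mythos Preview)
Your approach is essentially the paper's own: both proofs simply walk through Lemmas~\ref{thm:modular}, \ref{thm:2adic}, \ref{thm:onethird} and Theorems~\ref{thm:numbersappear}, \ref{thm:containsallrational}, noting that the only reduction-dependent ingredient is that the reducing factor is always odd, which follows once no entry ever has both numerator and denominator even. The one difference is that you devote a paragraph to the word ``unique'' and sketch a somewhat elaborate positional argument for it; the paper's proof does not treat uniqueness separately at all. In fact uniqueness is immediate from a fact implicit throughout the paper: every row of $SB(\frac ab,\frac cd,R)$ is strictly increasing (each weighted mediant lies strictly between its parents, regardless of reduction), so a given rational value can occupy at most one position in any row, and once present it is merely copied downward. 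Your final sentence invoking Lemma~\ref{thm:cdetred} and ``$2$-adic bookkeeping'' to rule out a premature appearance is vague and unnecessary; the monotonicity observation replaces it in one line.
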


\begin{proof}
First we see that Lemma \ref{thm:modular} is no longer true, since it relies on the fact that no fractions with even numerator and even denominator even appear. Luckily, there is a simple fix. If we inspect the proof we see that fractions with both even numerator and even denominator appear only when such fractions appeared in the previous row. Thus as long as we stipulate that neither $a \equiv b \equiv 0 \pmod{2}$ nor $c \equiv d \equiv 0 \pmod{2}$ where $\frac{a}{b}$ and $\frac{c}{d}$ denote the starting terms as usual, the proof of Lemma \ref{thm:modular} proceeds as before. 

Now that we are guaranteed that fractions cannot reduce by an even factor, the proof of Lemma \ref{thm:2adic} extends immediately to arbitrary $R$. Indeed, this is the only fact on which the proof depends. The technical Lemma \ref{thm:onethird} does not depend on reduction even implicitly, and so its proof goes unmodified.

Next we consider Theorem~\ref{thm:containsallrational}. Again from the fact that the modulus is non-decreasing we can conclude that unless a target rational $x/y$ falls into the middle interval, the endpoint of which are not reduced, all but finitely many times, it appears in the tree. Once more, the unique intersection point of these nested closed intervals is the (ordinary) mediant of two consecutive terms in $SB(\frac{a}{b}, \frac{c}{d}, R)$.

Finally, Theorem \ref{thm:containsallrational} is proved from Lemmas \ref{thm:modular} and \ref{thm:2adic} (which are both true with the appropriate strengthened hypothesis) and some basic analysis of $2$-adic valuations. By our assumption that fractions in $SB(\frac{a}{b}, \frac{c}{d}, R)$ do not reduce by an even factor, the analysis is unaffected. Thus we have the desired extension of Theorem \ref{thm:containsallrational}.
\end{proof}

We conclude with two remarks. First, notice Theorem \ref{thm:withred} applies even when $\frac{a}{b}$, $\frac{c}{d}$ are not in lowest terms, as long as they cannot be reduced by an even factor. On the other hand, if, for instance, $a \equiv b \equiv 0 \pmod{2}$, there is no reasonable way to classify the numbers which appear in the tree for general $R$. This is because we can carry fractions with even numerator and denominator as far through as we like, and then reduce them to obtain fractions which may be in a new parity class altogether.

For a specific example, let us consider the tree $SB(\frac{0}{2}, \frac{1}{1})$. In one reduction scheme, which we will call $R'$, new terms are reduced uniformly to lowest terms starting with the second row. The corresponding tree is:

	\[ \frac{0}{2} \; \; \; \frac{1}{1}\]  \[\frac{0}{2} \; \; \; \frac{1}{5} \; \; \; \frac{2}{4} \; \; \; \frac{1}{1}\] 
	\[\frac{0}{2} \; \; \; \frac{1}{9} \; \; \;  \frac{1}{6} \; \; \; \frac{1}{5} \; \; \; \frac{2}{7} \; \; \; \frac{5}{13} \; \; \; \frac{2}{4} \; \; \; \frac{5}{9} \; \; \; \frac{2}{3} \; \; \; \frac{1}{1}\]
	
Observe that this tree contains fractions which have even numerator and odd denominator. On the other hand, we can also consider this tree with the familiar reduction scheme $R_u$ for the new terms. In this case, $SB(\frac{0}{2}, \frac{1}{1}, R_u)$ is 

	\[ \frac{0}{2} \; \; \; \frac{1}{1}\]  \[\frac{0}{2} \; \; \; \frac{1}{5} \; \; \; \frac{1}{2} \; \; \; \frac{1}{1}\] 
	\[\frac{0}{2} \; \; \; \frac{1}{9} \; \; \;  \frac{1}{6} \; \; \; \frac{1}{5} \; \; \; \frac{1}{4} \; \; \; \frac{1}{3} \; \; \; \frac{1}{2} \; \; \; \frac{3}{5} \; \; \; \frac{3}{4} \; \; \; \frac{1}{1}\]
	
Since all fractions except $\frac{0}{2}$ have an odd numerator, all mediants will have an odd numerator before reduction (and therefore, after reduction) except potentially the right mediant of $\frac{0}{2}$ and its neighbor. Yet it is easy to see (by induction) that the fraction to the right of $\frac{0}{2}$ in row $k$ of the tree is $\frac{1}{4k + 1}$, whence the right mediant of $\frac{0}{2}$ and its neighbor is $\frac{2}{4k + 4}$ which becomes $\frac{1}{2k + 2}$ when reduced to lowest terms. Thus $SB(\frac{0}{2}, \frac{1}{1})$ contains fractions with even numerator and an odd denominator under some reduction schemes, but not others. So in some sense, Theorem 4.1 is the strongest possible statement.

\section{Acknowledgements}

We would like to thank James Propp for suggesting the project and discussing it with us.

\end{document}